\DeclareMathOperator*{\essinf}{ess\,inf}
\newcommand{\ex}{{\rm e}}
\newcommand{\eqn}[1]{\begin{equation}#1\end{equation}}
\newcommand{\eqan}[1]{\begin{align}#1\end{align}}
\newcommand{\op}{o_{\sss {\mathbb P}}}
\newcommand{\convas}{\stackrel{\sss a.s.}{\longrightarrow}}
\newcommand{\prob}{\mathbb P}
\newcommand{\expec}{\mathbb E}
\newtheorem{remark}[theorem]{Remark}
\def\1{{\mathchoice {1\mskip-4mu\mathrm l}      % Blackboard bold 1
{1\mskip-4mu\mathrm l}
{1\mskip-4.5mu\mathrm l} {1\mskip-5mu\mathrm l}}}
\newcommand{\indic}[1]{\1_{\{#1\}}}
\newcommand{\sss}{\scriptscriptstyle}
\newcommand{\nn}{\nonumber}
\title{Large Deviations of Bivariate Gaussian Extrema}
\begin{document}

\author{Remco van der Hofstad}\address{Eindhoven University of Technology, P.O. Box 513, 5600 MB Eindhoven, The Netherlands. E-mail: rhofstad@win.tue.nl}
\author{Harsha Honnappa}\address{Purdue University, 315 N. Grant St., West Lafayette IN 47906 U.S.A. Email: honnappa@purdue.edu}
\maketitle

\begin{abstract}
We establish sharp tail asymptotics for component-wise extreme values of bivariate Gaussian random vectors with arbitrary correlation between the components. We consider two scaling regimes for the tail event in which we demonstrate the existence of a restricted large deviations principle, and identify the unique rate function associated with these asymptotics. Our results identify when the maxima of both coordinates are typically attained by two different vs.\ the same index, and how this depends on the correlation between the coordinates of the bivariate Gaussian random vectors.

Our results complement a growing body of work on the extremes of Gaussian processes. The results are also relevant for steady-state performance and simulation analysis of networks of infinite server queues.
\end{abstract}

\section{Introduction}\justify
Motivated by applications to the analysis of queueing networks, we study the large deviations of extreme values of multivariate Gaussians. We focus on the bivariate case for simplicity, but our analysis will carry over to the more general case with some effort. Let $\{X_1, X_2,\ldots\}$ be an ensemble of independent and identically distributed (i.i.d.) bi-variate Gaussians with covariance matrix $\Sigma$,
	and let $\bar X_n := (\max_{1\leq i\leq n} X^{\sss(1)}_i, \max_{1\leq i\leq n}X_i^{\sss(2)})$ be the component-wise maximum, or extreme value, random vector. For simplicity we assume that $\bbE[X_1] = 0$. In the context of a queueing network $\bar X_n$ is an approximation to the maximum congestion experienced over a typical interval in a network of infinite server queues, for instance. We characterize the likelihood of the tail event $\{\bar X_n > a_n u\}$ where $u \in (0,\infty)$ as the typical interval $n$ tends to infinity, under the assumption that $a_n \to \infty$ as $n \to\infty$. We consider two cases. 
	
\underline{Case 1: The right scale.} Under the condition that $a_n = \sqrt{\log n}$, we prove a ``restricted'' large deviations principle (RLDP) (in the sense of~\cite{KoMa2015}) in Theorem~\ref{thm-main-small} that shows that if $u > \sqrt{2} (\sigma^{\sss(1)},\sigma^{\sss(2)})$ (where $\sigma^{\sss(j)}$ is the standard deviation of marginal $j$) then
	\begin{align}
	\label{right-scale}
		\lim_{n\to\infty} \frac{1}{a_n^2} \log \bbP(\bar X_n > a_n u) = J(u/\sigma),
	\end{align}
% An explicit computation shows that
%	\eqan{
%	\nonumber
%	\label{sol-VP}
%	&I(u/\sigma)\\
%	&=\begin{cases}
%				\frac{1}{2}(u^{\sss(1)}/\sigma^{\sss(1)})^2 &\text{when }u^{\sss(2)}/\sigma^{\sss(2)}\leq \rho u^{\sss(1)}/\sigma^{\sss(1)},\\
%	\frac{1}{2}\min\big\{\|u/\sigma\|^2_2, \frac{(u^{\sss(1)}/\sigma^{\sss(1)})^2-2\rho (u^{\sss(1)}/\sigma^{\sss(1)}) (u^{\sss(2)}/\sigma^{\sss(2)})+(u^{\sss(2)}/\sigma^{\sss(2)})^2)}{1-\rho^2}\big\}
%	&\text{otherwise,}
%		\end{cases}\nn
%	}
%where $u/\sigma=(u^{\sss(1)}/\sigma^{\sss(1)}, u^{\sss(2)}/\sigma^{\sss(2)})$ and $\|\cdot\|_2$ represents the $\ell_2$ norm over $\bbR^d$. Denoting 
where
\begin{align}
\label{J(u)-def}
	J(u)= \begin{cases}
 		1 - \frac{1}{2} \left( {u^{\sss(1)}} \right)^2 &\text{when}~u^{\sss(2)} \leq \rho u^{\sss(1)},\\
		1 - \frac{1}{2} \left( {u^{\sss(2)}} \right)^2 &\text{when}~u^{\sss(1)} \leq \rho u^{\sss(2)},\\
 		\max\Big\{2 - \tfrac{1}{2} \left\|{u} \right\|_2^2, 1-\frac{(u^{\sss(1)}/\sigma^{\sss(1)})^2-2\rho (u^{\sss(1)}/\sigma^{\sss(1)}) (u^{\sss(2)}/\sigma^{\sss(2)})+(u^{\sss(2)}/\sigma^{\sss(2)})^2}{2(1-\rho^2)}\Big\}&\text{otherwise,}
 	\end{cases}
	\end{align}
 $u/\sigma := (u^{\sss(1)}/\sigma^{\sss(1)}, u^{\sss(2)}/\sigma^{\sss(2)})$, $\|u\|_2^2 := \sum_{i=1,2} |u^{\sss(i)}|^2$ and $\rho \in [-1,+1]$. Here, the top two cases only arise when $\rho>0$, so when $\rho<0$, the last line equals $J(u)$. The proof follows by using the Laplace principle in the key Lemma~1, and combining it with the `largest probability wins' principle. 

The different cases in \eqref{J(u)-def} originate due to the different scenarios in which the bivariate distribution can attain its maximum. In all the cases where a term +1 is present, the maximum is attained by {\em one} index of $X_i$ which simultaneously attains the maximum of both coordinates. In all the cases where a term +2 is present, the maximum is attained by {\em two different} indices of $X_i$, one which attains the maximum of the first coordinate, and one which attains the maximum of the second coordinate. The latter case has most distinct possibilities (``larger entropy''), while the first may have a larger probability for appropriate correlation coefficients $\rho$. The optimal strategy is characterized by the `largest probability wins' principle. 
 	
\underline{Case 2: Larger scales.} On a much larger scale, where $a_n \gg \sqrt{\log n}$, we establish two main results. First, we prove a leading order asymptote for the extreme value that aligns with the result in case 1. Precisely, in Theorem~\ref{thm-leading-order} we prove the RLDP 
	\eqn{
	\label{LD-large}
	\lim_{n\to\infty} \frac{1}{a_n^2} \log \prob\big(\bar X_n> a_n u) = -I(u/\sigma),
	}
where
	\eqn{
	\label{rate-BVN}
	I(u)=\begin{cases}
	\frac{1}{2}(u^{\sss(1)})^2 &\text{when }u^{\sss(2)}\leq \rho u^{\sss(1)},\\
	\frac{1}{2} \left( {u^{\sss(2)}} \right)^2 &\text{when}~u^{\sss(1)} \leq \rho u^{\sss(2)},\\
	\frac{1}{2}\min\big\{\|u\|^2_2, \frac{(u^{\sss(1)})^2-2\rho u^{\sss(1)} u^{\sss(2)}+(u^{\sss(2)})^2)}{1-\rho^2}\big\}
	&\text{otherwise.}
	\end{cases}
	}
%
%\RvdH{However, I would expect a +2 in some cases, namely, those where also $c=2$ in \eqref{Bahadur-Rao} below. How to reconcile this?}
%\Harsha{I'm not sure I follow, since it appears that if one takes a logarithm of (6) and then takes the limit as $n\to\infty$ the rate function is -2-I(u). Is there anything to reconcile? I'm not sure. Nonetheless, I removed the statement.}

In Theorem~\ref{thm-sharp-order}, on the other hand, we establish a sharp asymptote for the likelihood and show that there exists a continuous function $I \colon \bbR^d \to \bbR$ and constants $K$, $b$ and $c$ such that
	\begin{align}
	\label{Bahadur-Rao}
		\lim_{n\to\infty} a_n^b n^c {\mathrm e}^{a_n^2 I(u)} \bbP(\bar X_n > a_n u) = K.
	\end{align}
	The proofs of these theorems uses the inclusion-exclusion principle to bound the likelihood from above and below. 

	\paragraph{\underline{Related Literature}}
Multivariate Gaussians emerge as stationary limits of networks of $G/G/\infty$ infinite server queues when the arrival rate is high (i.e., in heavy-traffic in the sense of~\cite{glynn1991new}); see \cite{borovkov1967limit,whitt1982heavy} as well. This is straightforward to observe in the case of a single $M/G/\infty$ station where the number in system in steady-state is Poisson distributed. When the arrival rate is high the steady-state distribution is well approximated by a Gaussian.
% Similarly, in the network setting it is well known that the steady-state distribution of the number in system process is a multinomial \cite{boxma1984m,harrison1981note,massey1993networks} that is, again, well approximated by a multivariate Gaussian in the heavy-traffic scale.

Next, there is an explicit connection with extreme value theory (EVT). The logarithmic asymptotics established here complement the uniform convergence results for EVT; see \cite[Chapter 4]{Vi1994} and \cite[Section I, Chapter C]{de1983}.
There are also clear connections with recent work on extremes of multidimensional Gaussian processes in \cite{DeHaJiRo2018,DeHaJiTa2015,DeKoMaRo2010,KoMa2015,honnappa2018dominating} and other related work, where logarithmic asymptotics are derived for the ``at least one in the set" extremum (not the component-wise extremum considered here) for Gaussian processes. We note, in particular, ~\cite{KoMa2015} where logarithmic asymptotics are derived for the ``at least one in set" extremum of a sequence of (non-i.i.d.) generally distributed random vectors. The authors present a general theory closely aligned with the RLDP for univariate random variables introduced in~\cite{DuLeSu2003}, whereby the G\"artner-Ellis condition need not be satisfied. Of course, our results are more restrictive in the sense that we only study i.i.d.\ Gaussian random vectors, but we also consider large-scale asymptotics that are not under consideration there.

Our results are also closely related to the important series of papers by Hashorva and H{\"u}sler~\cite{Has2005,HasHu2002,HasHu2003} generalizing the classic Mills ratio Gaussian tail bound~\cite{Sa1962}. We observe that the quadratic program logarithmic asymptote derived in Lemma 1 is also implied by the tail bound dervied in~\cite{Has2005,HasHu2003}. In~\cite{HasHu2003}, the authors derive exact asymptotics for integrals of Gaussian random vectors, and in particular focus on the ``at least one in the set" extremum for half-space extreme value sets. Our proof does not rely on the bound in \cite{Has2005,HasHu2003}.

It would be interesting to strengthen Theorem \ref{thm-main-small} to sharp asymptotics as is performed for large scales in Theorem \ref{thm-sharp-order}. This is hard, since various error terms that can easily be dealt with in the proof of Theorem \ref{thm-sharp-order} as they are much smaller than the leading order, will only become marginally smaller. In would also be interesting to extend our analysis to other multivariate random vectors with non-trivial dependence.

\paragraph{\underline{Notations and Setting}}
 All vector relations should be understood component-wise. Thus, $x > y$ implies that $x^{\sss (j)} > y^{\sss(j)}$ for every component $j$. Following~\cite{KoMa2015} we define a \textit{restricted} large deviation principle (RLDP) as follows: for any $q \in \bbR^d$ a sequence of multivariate $\bbR^d$-valued random variables $\{W_n\}$ satisfies a RLDP with rate function $J \colon \bbR^d \to [0,\infty]$ if 
	\begin{align}
	\lim_{n\to\infty} \frac{1}{v_n} \log \bbP(W_n/a_n > q) = -J(q),
	\end{align}
where $v_n,a_n \to \infty$ as $n\to\infty$. This asymptotic is not a full-fledged large deviations principle (LDP) since it does not provide any insight into what happens for negative $q$, i.e., it only deals with attaining {\em large positive} values. Furthermore, as noted in~\cite{KoMa2015} and \cite{DuLeSu2003}, if $W_n$ satisfies a LDP with  continuous rate function, then it automatically satisfies the RLDP. On the other hand, if the rate function is discontinuous, then it might not satisfy the RLDP.

We can write
	\eqn{
	(X^{\sss(1)}_i, X^{\sss(2)}_i)\stackrel{d}{=} (\sigma^{\sss(1)}Z^{\sss\rm(1)}_i + \mu^{\sss(1)}, \sigma^{\sss(2)}Z^{\sss\rm(2)}_i + \mu^{\sss(2)}),
	}
where $(\sigma^{\sss(1)},\mu^{\sss(1)})$ and $(\sigma^{\sss(2)},\mu^{\sss(2)})$ are the standard deviation and mean of $X^{\sss(1)}_i$ and  $X^{\sss(2)}_i$, respectively, while $(Z^{\sss(1)}_i, Z^{\sss(2)}_i)$ are standard bi-variate normals with correlation coefficient $\rho\in[-1,1]$. Assume that $\mu^{\sss(i)} = 0$ for $i \in \{1,2\}$, without loss of generality.

\section{Right Scale Asymptote}
 We start by analyzing extreme events for bivariate Gaussian random variables:

\begin{lemma}[Extreme events for single normal random variables]
\label{lem:1}
Let $\{a_n\}_{n\geq 1}$ be any unbounded increasing sequence in $n \in \bbN$. Then
  	\begin{align}
    	\label{eq:4}
    	\lim_{n\to\infty} \frac{1}{a_n^2} \log \bbP\left( X_1 > a_n \e
    	\right) = -\essinf_{x > \e} \frac{1}{2} x^T \Sigma^{-1} x.
 	 \end{align}
	\end{lemma}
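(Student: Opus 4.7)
\textbf{Proof proposal for Lemma~\ref{lem:1}.}

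The plan is a direct application of Laplace's principle to the Gaussian density. Write the probability as
\begin{align*}
\bbP(X_1 > a_n \e) = \frac{1}{2\pi\sqrt{\det \Sigma}}\int_{x>a_n \e} \exp\Big(-\tfrac12 x^T\Sigma^{-1}x\Big)\,dx.
\end{align*}
After the rescaling $x = a_n y$, which produces a Jacobian factor $a_n^2$, this becomes
\begin{align*}
\bbP(X_1 > a_n \e) = \frac{a_n^2}{2\pi\sqrt{\det \Sigma}}\int_{y>\e}\exp\!\Big(-a_n^2\,\tfrac12 y^T\Sigma^{-1}y\Big)\,dy.
\end{align*}
The polynomial prefactor contributes $O(\log a_n)/a_n^2 \to 0$ after taking logarithm and dividing by $a_n^2$, so only the exponential integral matters.

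Let $f(y) := \tfrac12 y^T\Sigma^{-1}y$, which is continuous, strictly convex, and coercive. For the upper bound I would simply estimate
\begin{align*}
\int_{y>\e}\ex^{-a_n^2 f(y)}\,dy \leq \ex^{-a_n^2 \inf_{y>\e} f(y)}\int_{y>\e}\ex^{-a_n^2(f(y)-\inf f)}\,dy,
\end{align*}
and bound the remaining integral by a Gaussian-type integral whose value is at most polynomial in $a_n^{-1}$ (this uses coercivity of $f$, so the tail of the integrand decays fast enough). Taking logarithm, dividing by $a_n^2$, and letting $n\to\infty$ gives the upper bound $-\inf_{y>\e} f(y)$.

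For the lower bound, let $y^\star$ be a minimizer of $f$ over the closure $\{y\geq \e\}$ (which exists by continuity and coercivity) and note that $\inf_{y>\e} f = \inf_{y\geq \e} f = f(y^\star)$ by continuity. Given $\delta > 0$, by continuity there is a small open neighborhood $N_\delta \subset \{y > \e\}$ (for example, a ball around $y^\star + \eta\mathbf{1}$ for small $\eta>0$ when $y^\star$ lies on the boundary) on which $f(y) \leq f(y^\star)+\delta$. Then
\begin{align*}
\int_{y>\e}\ex^{-a_n^2 f(y)}\,dy \geq |N_\delta|\,\ex^{-a_n^2 (f(y^\star)+\delta)},
\end{align*}
so $\liminf_n a_n^{-2}\log \bbP(X_1>a_n\e) \geq -f(y^\star)-\delta$, and sending $\delta\downarrow 0$ completes the argument. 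Since $f$ is continuous, the essential infimum in the statement coincides with the ordinary infimum, which is $f(y^\star)$.

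The only nontrivial point is handling the lower bound when the minimizer sits on the boundary of the open cone $\{y>\e\}$, which is the generic situation (the unconstrained minimum of $f$ is $0$ at the origin and hence never lies in $\{y>\e\}$ for $\e$ with a positive component). This is handled by pushing $y^\star$ slightly into the interior and using continuity of $f$, so this is the main --- but very mild --- technical step of the proof.
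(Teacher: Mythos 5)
Your proof follows the same route as the paper: rescale $x = a_n y$ to pull the factor $a_n^2$ into the exponent and then apply Laplace's principle, with the paper simply citing Laplace's principle from \cite{DeZe2010} while you spell out the upper and lower bound argument (including the correct handling of a boundary minimizer). One minor remark: your Jacobian factor $a_n^2$ is the right power in two dimensions (the paper's displayed formula has $a_n$, which appears to be a typo, though it is immaterial after taking $\log$ and dividing by $a_n^2$).
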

\begin{proof}
By definition, and with $C$ an  explicit constant,
	\eqan{
  	\label{eq:5}
  	\frac{1}{a_n^2}\log\bbP\left( X_1 > a_n \e \right) &= 
	 \frac{1}{a_n^2}\log \left(C \int_{x > a_n \e} \exp\left(-\frac{1}{2} x^T\Sigma^{-1}x\right) dx \right)\\
	 &=\frac{1}{a_n^2}\log \left( a_n C \int_{x > \e} \exp\left(-a_n^2\frac{1}{2} x^T\Sigma^{-1}x\right) dx \right),\nn
	}
where the second equality follows by a substitution of variables. Laplace's principle~\cite[Chapter 4]{DeZe2010} implies the claim.
\end{proof}

Next, we consider the asymptotics of the logarithmic likelihood of the event $\{\exists i \leq n\colon X_i > a_n u\}$. Note that this is {\it not} the component-wise maximum.

\begin{proposition}[A single index attains the maximum]
\label{prop:1}
Let $a_n := \sqrt{\log n}$. The bivariate Gaussian ensemble satisfies the RLDP limit
  	\begin{align}
    	\label{eq:2}
    	\lim_{n\to\infty} \frac{1}{a_n^2} \log \bbP(\exists i \leq n \colon X_i > a_n u) = 1    - \essinf_{x > u} \frac{1}{2} x^T \Sigma^{-1} x,
  	\end{align}
for $u := (u^{\sss(1)},u^{\sss(2)}) > \sqrt{2}(\s^{\sss(1)},\s^{\sss(2)})$.
\end{proposition}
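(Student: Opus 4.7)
The plan is to exploit the i.i.d.\ structure of the $X_i$ and reduce to the single-index asymptotics supplied by Lemma~\ref{lem:1}. Setting $p_n := \bbP(X_1 > a_n u)$ and $I(u) := \essinf_{x > u}\tfrac12 x^T\Sigma^{-1}x$, Lemma~\ref{lem:1} gives $\tfrac{1}{a_n^2}\log p_n \to -I(u)$, so on the scale $a_n^2 = \log n$ one has $p_n = n^{-I(u)+o(1)}$. This already pins down the heuristic answer $\log \bbP(\exists i) \approx \log(np_n) = (\log n)(1 - I(u) + o(1))$, and the task is to make this rigorous.

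For the upper bound I would apply the union bound $\bbP(\exists i \leq n : X_i > a_n u) \leq n p_n$ and pass to logarithms, yielding the desired limit $1 - I(u)$ immediately. For the matching lower bound I would invoke second-order Bonferroni, which thanks to the i.i.d.\ assumption gives
\begin{align*}
\bbP(\exists i \leq n : X_i > a_n u) \;\geq\; np_n - \binom{n}{2}p_n^2 \;=\; np_n\Bigl(1 - \tfrac{(n-1)p_n}{2}\Bigr).
\end{align*}
Provided $np_n \to 0$, the parenthesis tends to $1$, whence $\bbP \geq \tfrac{1}{2}np_n$ eventually and the logarithmic lower bound matches the upper one.

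The main technical step is therefore to verify that the hypothesis $u > \sqrt 2 (\sigma^{(1)},\sigma^{(2)})$ forces $I(u) > 1$, equivalently $np_n \to 0$, so that the Bonferroni correction is genuinely negligible. I would solve the constrained convex minimization $\essinf_{x > u}\tfrac12 x^T\Sigma^{-1}x$ by a case split on the sign of the gradient of $\tfrac12 x^T\Sigma^{-1}x$ at $x = u$. Writing $v^{(i)} := u^{(i)}/\sigma^{(i)} > \sqrt 2$, in the ``interior'' case $v^{(1)} \geq \rho v^{(2)}$, $v^{(2)} \geq \rho v^{(1)}$ the minimum is attained at $v$ and the inequality $I(u) > 1$ reduces to $(v^{(1)})^2 - 2\rho v^{(1)}v^{(2)} + (v^{(2)})^2 > 2(1-\rho^2)$; already at the (excluded) boundary $v = (\sqrt 2,\sqrt 2)$ the gap equals $2(1-\rho)^2 \geq 0$, and it becomes strict for $v^{(i)} > \sqrt 2$. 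In the ``edge'' cases (one partial derivative of $\tfrac12 x^T\Sigma^{-1}x$ at $v$ is negative, which can only happen when $\rho > 0$), one slides along the corresponding coordinate until the gradient vanishes and the minimum reduces to $\tfrac12 (v^{(j)})^2 > 1$ for the other index $j$. This boundary computation is the only nonroutine piece; everything else is book-keeping around Lemma~\ref{lem:1} and Bonferroni.
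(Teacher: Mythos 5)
Your proof is correct, and it reaches the result by a route that differs in the details from the paper's. Both arguments rest on the same two pillars: the union bound $\bbP(\exists i)\leq n p_n$ for the $\limsup$, and an inclusion--exclusion lower bound whose correction term is negligible precisely because $n p_n \to 0$, where $p_n := \bbP(X_1 > a_n u)$. For the lower bound the paper works from the exact identity
\begin{equation*}
\bbP(\exists i \leq n\colon X_i > a_n u) \;=\; p_n \sum_{i=0}^{n-1}(1-p_n)^i
\end{equation*}
and bounds the geometric sum from below by $n(1-p_n)^{n-1}$, controlling $(n-1)p_n$ via the inclusion--exclusion bound $p_n = 1-b_n \leq \bbP(X^{\sss(1)}>a_n u^{\sss(1)}) + \bbP(X^{\sss(2)}>a_n u^{\sss(2)})$ together with the Gaussian tail estimate; the assumption $u^{\sss(j)} > \sqrt 2\,\sigma^{\sss(j)}$ then makes each marginal tail $\ll 1/n$. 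You instead use second-order Bonferroni, $\bbP(\exists i)\geq n p_n - \binom{n}{2}p_n^2$, and verify $n p_n\to 0$ by proving $I(u):=\essinf_{x>u}\tfrac12 x^T\Sigma^{-1}x>1$ from the explicit solution of the quadratic program. Your case analysis is correct (the gap computation $q(\sqrt2,\sqrt2)-2(1-\rho^2)=2(1-\rho)^2$ checks out, and in the interior region the minimum of $q$ over $\{v\geq(\sqrt2,\sqrt2)\}$ is attained at the corner since both partials are nonnegative there). One small remark: the variational verification is more work than strictly needed, since $p_n\leq \bbP(X^{\sss(1)}>a_n u^{\sss(1)})$ already gives $I(u)\geq\tfrac12(u^{\sss(1)}/\sigma^{\sss(1)})^2>1$ with no case split; this is, in essence, the shortcut the paper's marginal-tail bound exploits. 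Either way, the Bonferroni route is a clean and entirely valid alternative to the geometric-series identity.
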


\begin{remark}[Condition $(u^{\sss(1)},u^{\sss(2)}) > \sqrt{2}(\s^{\sss(1)},\s^{\sss(2)})$]
\label{rem-cond-u-s}
The condition $(u^{\sss(1)},u^{\sss(2)}) > \sqrt{2}(\s^{\sss(1)},\s^{\sss(2)})$ is very natural. Indeed, since the marginal distributions of each of the coordinates is normal with mean zero and standard deviation $\s^{\sss(j)}$, we have that $\max_{i=1}^n X_i^{\sss(j)}/\sqrt{\log{n}} \convas \sqrt{2} \s^{\sss(j)}$ for $j=1,2$. Thus, when $u^{\sss(j)}\leq \sqrt{2}\s^{\sss(j)}$, it is natural to assume that this event does not contribute to the asymptotics in Proposition \ref{prop:1}. In particular, when $(u^{\sss(1)},u^{\sss(2)}) \leq  \sqrt{2}(\s^{\sss(1)},\s^{\sss(2)})$, the limit in \eqref{eq:2} equals zero, whereas the right-hand side is strictly positive.
\end{remark}

\begin{proof}
Observe that
\color{black}
	\eqan{
  	\bbP(\exists i \leq n \colon X_i > a_n u) &= \bbP\left( \cup_{i=1}^n \left\{X_i > a_n\right\}\right)\nn\\
  	&= 1 - \bbP(\cap_{i=1}^n \left\{ X_i > a_n u \right\}^c)\nn\\
  	&=(1-\bbP(\{X_1> a_n u\}^c))\sum_{i=0}^{n-1} \bbP(\{X_i > a_n u\}^c)\nn\\
  	&= \bbP(X_1 > a_n u) \sum_{i=0}^{n-1} b_n^i,	 \label{eq:1}
	}
where $b_n := \bbP(\{X_1 > a_n u\}^c)$. 
\color{black}
%\RvdH{In the last formula, does $i+1$ plays the role of the first index that is larger than $a_n u$?}
%\Harsha{No, it does not. Note that the random vectors are i.i.d., so the specific index doesn't really matter - just that one of the r.v.'s in the ensemble exceeds $a_n u$.}
%\RvdH{How is in these formulas the possibility that the indices for which $\bar X_n > a_n u$ occurs coordinate-wise are different?}
%\Harsha{Note that I've now changed the definition of the "extreme vector" in this result (also changed from a theorem to a proposition).}

From~\eqref{eq:1}, it follows that
  	\begin{align}
    	\label{eq:3}
    	\log \bbP(\exists i \leq n\colon X_i > a_n u) \leq \log \bbP(X_1 > a_n u) + \log n,
  	\end{align}
using the fact that $b_n < 1$ for all finite $n$. Lemma~\ref{lem:1} implies that
  	\begin{align}
    	\label{eq:6}
    	\limsup_{n\to\infty} \frac{1}{a_n^2} \log \bbP\left(\bar X_n > a_n
   	 u\right) \leq 1 - \frac{1}{2}\essinf_{x > u} x^T \Sigma^{-1} x.
 	 \end{align}

Next, for the lower bound, we work with the term $\log \sum_{i=0}^{n-1} b_n^i$ to obtain a finer analysis. In particular,
suppose we demonstrate that, since $b_n \in [0,1]$, $\log(n b_n^{n-1}) \geq \log n + o(\log n)$ as $n \to
\infty$; then, it follows that
	\begin{align}
  	\label{eq:7}
  	\log \sum_{i=0}^{n-1} b_n^i > \log (n b_n^{n-1}) \geq \log
  	n + o(\log n)~\text{as}~n\to\infty.
	\end{align}
Consequently, Lemma~\ref{lem:1}, combined with this result, implies that
	\begin{align}
  	\label{eq:8}
  	\liminf_{n\to\infty}\frac{1}{a_n^2} \log \bbP\left( \exists i \leq n \colon X_i > a_n u\right) \geq
  	1 - \essinf_{x > u} \frac{1}{2} x^T
  	\Sigma^{-1} x,
	\end{align}
thereby completing the proof of the proposition.

It remains to show~\eqref{eq:7}. Observe that the inclusion-exclusion formula implies that 
	\color{black}
	\eqn{
	b_n = \bbP(\{X_1 > a_n u\}^c)=\bbP\big(\{X_1^{\sss(1)} \leq a_n u^{\sss(1)}\}\cup \{X_1^{\sss(2)} \leq a_n u^{\sss(2)}\}\big)
	}
 satisfies
	\begin{align}
	b_n &= \bbP({X_1^{\sss(1)} \leq a_n u^{\sss(1)}}) + \bbP(X_n^{\sss(2)} \leq a_n u^{\sss(2)}) - \bbP(\{X_n^{\sss(1)} \leq a_n u^{\sss(1)}\}\cap \{X_n^{\sss(2)} \leq a_n u^{\sss(2)}\})\\
	&= 2 - \bbP({X_1^{\sss(1)} >  a_n u^{\sss(1)}}) - \bbP(X_n^{\sss(2)} > a_n u^{\sss(2)}) - \bbP(\{X_n^{\sss(1)} \leq a_n u^{\sss(1)}\}\cap \{X_n^{\sss(2)} \leq a_n u^{\sss(2)}\}).\nn
	\end{align}
	\color{black}
Therefore
	\color{black}
	\eqn{
	b_n \geq 1 - \bbP({X_1^{\sss(1)} >  a_n u^{\sss(1)}}) - \bbP(X_n^{\sss(2)} > a_n u^{\sss(2)}).
	}
	\color{black}
By the Taylor series expansion of $\log (1-x) = -x + o(x)$, as well as $1-b_n=o(1)$,
	\color{black}
	\begin{align}
	\log(n b_n^{n-1}) &= \log n + (n-1) \log{b_n}=\log n - (n-1) (1-b_n) + o(\log n)\\ 
	&\geq \log n +n\left(- \bbP({X_1^{\sss(1)} >  a_n u^{\sss(1)}}) - \bbP(X_n^{\sss(2)} > a_n u^{\sss(2)})\right)(1 + o(1)).\nn
	\end{align}
	\color{black}
Next, the Gaussian upper tail bound implies that for large $n$,
	\begin{align}
	\log(n b_n^{n-1}) &\geq \log n - \frac{n}{\sqrt{2\pi}} \frac{1}{\sqrt{\log n}} \sum_{j\in\{1,2\}} \frac{1}{n^{1/2(u^{\sss(i)}/\sigma^{\sss(i)})^2}} \frac{1}{\sigma^{\sss(j)}u^{\sss(j)}}(1 + o(1))\nn\\
	&\geq  \log n - \frac{1}{\sqrt{2\pi}} \frac{2}{\sqrt{\log n}} \max \left\{ \frac{1}{n^{\sss(u^{\sss(1)}/\sigma^{\sss(1)})^2/2-1}} \frac{1}{\sigma^{\sss(1)}u^{\sss(1)}}, \frac{1}{n^{\sss(u^{\sss(2)}/\sigma^{\sss(2)})^2/2-1}} 	
	\frac{1}{\sigma^{\sss(2)}u^{\sss(2)}}  \right\}(1 + o(1)).
	\end{align}
Since $(u^{\sss(1)},u^{\sss(2)}) > \sqrt{2}(\s^{\sss(1)},\s^{\sss(2)})$, it follows that
	\eqn{
	\frac{\log(n b_n^{n-1})}{\log n} \geq 1 + o(1) ~\text{as}~n\to\infty,
	}
thereby completing the proof.
\end{proof}

\begin{lemma}[Analysis of variational problem]
\label{rem-VP}
	By a straightforward calculation,
	\eqan{
	\label{sol-VP}
	J_1(u/\sigma)&:=1-\tfrac{1}{2}\essinf_{x > u} x^T \Sigma^{-1} x
	=\begin{cases}
				1-\frac{1}{2}(u^{\sss(1)}/\sigma^{\sss(1)})^2 &\text{when }u^{\sss(2)}/\sigma^{\sss(2)}\leq \rho u^{\sss(1)}/\sigma^{\sss(1)},\\
				1-\frac{1}{2}(u^{\sss(2)}/\sigma^{\sss(2)})^2 &\text{when }u^{\sss(1)}/\sigma^{\sss(1)}\leq \rho u^{\sss(2)}/\sigma^{\sss(2)},\\
	1-\frac{(u^{\sss(1)}/\sigma^{\sss(1)})^2-2\rho (u^{\sss(1)}/\sigma^{\sss(1)}) (u^{\sss(2)}/\sigma^{\sss(2)})+(u^{\sss(2)}/\sigma^{\sss(2)})^2}{2(1-\rho^2)}
	&\text{otherwise.}
		\end{cases}\nn
	}
%where $u/\sigma=(u^{\sss(1)}/\sigma^{\sss(1)}, u^{\sss(2)}/\sigma^{\sss(2)})$ and $\|x\|_2^2 := (x^{\sss(1)})^2 + (x^{\sss(2)})^2$.
\end{lemma}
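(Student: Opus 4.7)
The plan is to reduce the variational problem to a constrained convex quadratic programme over the positive orthant and then apply Karush--Kuhn--Tucker (KKT) conditions. Since the objective $x\mapsto x^T\Sigma^{-1}x$ is continuous and the strict-inequality set $\{x>u\}$ is dense in the closed set $\{x\geq u\}$, the essential infimum equals the minimum over the closed feasible region, so I can work with a bona fide convex minimisation problem.

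First I would change variables via $y^{\sss(i)} = x^{\sss(i)}/\sigma^{\sss(i)}$, so that $x^T\Sigma^{-1}x = y^T R^{-1}y$ where $R$ is the correlation matrix with off-diagonal $\rho$. Writing $v=u/\sigma$, the problem becomes
\begin{equation*}
\min_{y\geq v}\;\frac{(y^{\sss(1)})^2 - 2\rho y^{\sss(1)}y^{\sss(2)}+(y^{\sss(2)})^2}{1-\rho^2}.
\end{equation*}
The Hessian $R^{-1}$ is positive definite for $|\rho|<1$, so the KKT conditions are both necessary and sufficient for a global minimum. Introducing multipliers $\lambda_1,\lambda_2\geq 0$ for the constraints $y^{\sss(i)}\geq v^{\sss(i)}$, stationarity gives $R^{-1}y=(\lambda_1,\lambda_2)^T$, together with complementary slackness $\lambda_i(y^{\sss(i)}-v^{\sss(i)})=0$.

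Next I would enumerate the four KKT regimes. The case $\lambda_1=\lambda_2=0$ forces $y=0$, which is infeasible since $v>0$. The case $\lambda_1>0$, $\lambda_2=0$ gives $y^{\sss(1)}=v^{\sss(1)}$ and, from the second stationarity equation, $y^{\sss(2)}=\rho v^{\sss(1)}$; feasibility $y^{\sss(2)}\geq v^{\sss(2)}$ requires $v^{\sss(2)}\leq \rho v^{\sss(1)}$, and the objective evaluates, after cancellation of $(1-\rho^2)$, to $(v^{\sss(1)})^2$, giving the first branch of $J_1$. The symmetric case $\lambda_1=0$, $\lambda_2>0$ yields the second branch. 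Finally, when both multipliers are strictly positive the optimum sits at the corner $y=v$, evaluating to $\big((v^{\sss(1)})^2-2\rho v^{\sss(1)}v^{\sss(2)}+(v^{\sss(2)})^2\big)/(1-\rho^2)$; the multiplier signs $\lambda_i\geq 0$ translate to $v^{\sss(1)}\geq \rho v^{\sss(2)}$ and $v^{\sss(2)}\geq \rho v^{\sss(1)}$, which is precisely the complement of the first two regions.

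There is no real obstacle here — the hardest part is merely bookkeeping to check that the three regions partition the positive orthant, and to note that when $\rho\leq 0$ the first two regions are empty (since $v>0$), so that only the corner branch is active, consistent with the remark that the top two lines of~\eqref{J(u)-def} appear only when $\rho>0$. Re-expressing $v^{\sss(i)}=u^{\sss(i)}/\sigma^{\sss(i)}$ in each branch yields the formula stated.
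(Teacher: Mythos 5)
Your proof is correct and more systematic than the paper's. The paper makes the same normalisation ($\sigma^{\sss(1)}=\sigma^{\sss(2)}=1$, work on the correlation matrix) and the same three-region decomposition of the positive quadrant, but it treats the regions differently: for the two cone regions it argues via the chain of inequalities
\[
\frac{(u^{\sss(1)})^2-2\rho u^{\sss(1)}u^{\sss(2)}+(u^{\sss(2)})^2}{2(1-\rho^2)}\;\leq\;\tfrac{1}{2}(u^{\sss(1)})^2\;<\;\tfrac{1}{2}(x^{\sss(1)})^2 \quad\text{for } x>u,
\]
and reserves the KKT verification for the middle wedge where both constraints bind. You instead run KKT uniformly across all four multiplier regimes, which has two advantages. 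First, it produces the minimiser explicitly in each case (e.g.\ $(v^{\sss(1)},\rho v^{\sss(1)})$ when $v^{\sss(2)}\leq\rho v^{\sss(1)}$), directly verifying both the optimal value and the region boundary from the multiplier-sign conditions. Second, it sidesteps a weak link in the paper's first-region argument: the first inequality in the displayed chain above is actually reversed --- a direct computation gives $\frac{(u^{\sss(1)})^2-2\rho u^{\sss(1)}u^{\sss(2)}+(u^{\sss(2)})^2}{2(1-\rho^2)} - \tfrac{1}{2}(u^{\sss(1)})^2 = \frac{(\rho u^{\sss(1)}-u^{\sss(2)})^2}{2(1-\rho^2)}\geq 0$, with equality only on the boundary $u^{\sss(2)}=\rho u^{\sss(1)}$ --- and in any case the value at the corner $u$ is not obviously relevant since $u\notin\{x>u\}$ and the minimiser in this regime lies on the facet $x^{\sss(1)}=u^{\sss(1)}$ rather than at the corner. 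Your KKT treatment avoids this entirely and delivers the same stated formula cleanly, including the observation that the first two branches are vacuous when $\rho\leq 0$.
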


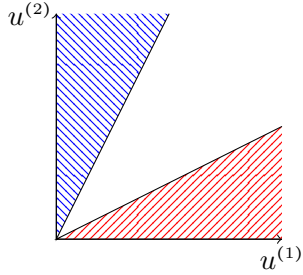
\begin{figure}[h]
	\centering
	\begin{tikzpicture}
		\draw[<->] (3,0)  -- (0,0) -- (0,3); \node at (3.0, -0.25) {$u^{\sss(1)}$}; \node at (-0.35,3.0) {$u^{\sss(2)}$}; 
		\path[draw, pattern=north west lines, pattern color = blue] (1.5,3.0) -- (0,0) -- (0,3.0);
		\path[draw, pattern=north east lines, pattern color = red] (3.0,1.5) -- (0,0) -- (3.0, 0);
	\end{tikzpicture}
	\caption{Fix $\rho=0.5$. The blue cone represents the region where $u^{\sss(1)} \leq \rho u^{\sss(2)}$ and the red cone where $u^{\sss(2)} \leq \rho u^{\sss(1)}$.}
	\label{fig:1}
\end{figure}

\begin{proof} Fix $\rho \in (0,1]$ and without loss of generality assume that $\sigma^{\sss(1)} = \sigma^{\sss(2)} = 1$. We can divide the positive quadrant into three regions as shown in Figure~\ref{fig:1}, where $\rho = 0.5$. Suppose that $u$ is such that $u^{\sss(2)} \leq \rho u^{\sss(1)}$ (see the red region in Figure~\ref{fig:1}), then
	\begin{align}
		\frac{(u^{\sss(1)}/\sigma^{\sss(1)})^2-2\rho (u^{\sss(1)}/\sigma^{\sss(1)}) (u^{\sss(2)}/\sigma^{\sss(2)})+(u^{\sss(2)}/\sigma^{\sss(2)})^2}{2(1-\rho^2)} 
		\leq \tfrac{1}{2}(u^{\sss(1)})^2 < \tfrac{1}{2}(x^{\sss(1)})^2,
	\end{align}
where the final inequality holds for any $x > u$. It follows that $\tfrac{1}{2}\essinf_{x > u} x^T \Sigma^{-1} x  = \tfrac{1}{2}(u^{\sss(1)})^2$. A similar argument shows that $\tfrac{1}{2}\essinf_{x > u} x^T \Sigma^{-1} x = \frac{1}{2}(u^{\sss(2)})^2$ when $u^{\sss(1)} \leq \rho u^{\sss(2)}$. Finally, in the region where neither of these conditions holds (the blank region in Figure~\ref{fig:1}), it is straightforward to verify the Karush-Kuhn-Tucker (KKT) conditions for $u$ and, since $\Sigma^{-1}$ is positive definite, $u$ is the unique optimizer.
\end{proof}

%Next, we demonstrate that the event $\{\exists i \leq n : X_i > a_n u\}$ is, in a particular sense, {\em exponentially equivalent} to the event $\{\bar X_n > a_n u\}$, where $\bar X_n$ is the component-wise maximum as defined in Section~2. First, let $\delta_u : \bbR^d \to \{0,-\infty\}$ be defined as
%	\begin{align}
%		\delta_u(x) := \begin{cases}
%			0 &~\text{if}~ x > u\\
%			-\infty &~\text{if}~ x \leq u.
%		\end{cases}
%	\end{align}
%It is straightforward to see that $\{\exists i \leq n: X_i > a_n u\} = \{\max_{1\leq i\leq n}\delta_u(X_i/a_n) \geq 0\}$, and that $\{\bar X_n > a_n u\} = \{\delta_u(\bar X_n/a_n) \geq 0\}$. We now recall Lemma~3 from~\cite{honnappa2018dominating}.
%\begin{lemma}
%	Under the same conditions as Proposition~\ref{prop:1}, we have, for any $\epsilon > 0$
%	\begin{align}
%		\limsup_{n\to\infty} \frac{1}{a_n^2} \log \bbP\left(\left| \delta_u\left(\frac{\bar X_n}{a_n}\right) -\max_{1\leq i\leq n}\delta_u\left(\frac{X_i}{a_n} \right)\right| > \epsilon \right) = -\infty.
%	\end{align}
%\end{lemma}
%
%\RvdH{This I do not quite believe... See below.}

As a consequence, we obtain the main result of this section:
\begin{theorem}[Extreme value asymptotics for bi-variate Gaussians]
\label{thm-main-small}
Under the conditions of Proposition~\ref{prop:1}, 
	\eqn{
	\label{20}
	\lim_{n\to\infty} \frac{1}{a_n^2} \log \bbP\left( \bar X_n > a_n u\right) = J(u/\sigma),
	}
where
	\begin{align*}
	\label{J(u)-def}
		J(u)= \begin{cases}
 		1 - \frac{1}{2} \left( {u^{\sss(1)}} \right)^2 &\text{when}~u^{\sss(2)} \leq \rho u^{\sss(1)},\\
		1 - \frac{1}{2} \left( {u^{\sss(2)}} \right)^2 &\text{when}~u^{\sss(1)} \leq \rho u^{\sss(2)},\\
 		\max\big\{2 - \tfrac{1}{2} \left\|{u} \right\|_2^2, 1-\frac{(u^{\sss(1)}/\sigma^{\sss(1)})^2-2\rho (u^{\sss(1)}/\sigma^{\sss(1)}) (u^{\sss(2)}/\sigma^{\sss(2)})+(u^{\sss(2)}/\sigma^{\sss(2)})^2}{2(1-\rho^2)}\big\}&\text{otherwise.}%~u^{\sss(2)} > \rho u^{\sss(1)}.
% 		2 - \frac{1}{2} \left\|{u} \right\|_2^2 &\text{when}~u^{\sss(2)} \geq u^{\sss(1)} > \rho u^{\sss(1)},\\
% 		1 - \frac{1}{2} \|u\|_2^2 - 2 \rho u^{\sss(1)}u^{\sss(2)}&\text{when}~u^{\sss(1)}\geq u^{\sss(2)}> \rho u^{\sss(1)}.
 	\end{cases}
	\end{align*}
Further, with $(I^\star, J^\star)$ the indices that maximize $\bar X_n$ (i.e., $\bar X_n=(X^{\sss(1)}_{I^\star}, X^{\sss(2)}_{J^\star})$),
	\eqn{
	\label{asymp-indices-right}
	\lim_{n\rightarrow \infty}\prob\big(I^\star\neq J^\star\mid \bar X_n> a_n u\big)=
	\begin{cases}
	1 &\text{ when }2-\tfrac{1}{2}\|u/\sigma\|^2_2>J_1(u/\sigma),\\
	0 &\text{ when }2-\tfrac{1}{2}\|u/\sigma\|^2_2<J_1(u/\sigma).
	\end{cases}
	}
\end{theorem}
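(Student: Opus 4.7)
The plan is to decompose the event $\{\bar X_n > a_n u\}$ according to whether the two coordinate maxima are attained by the same or different indices, apply Proposition \ref{prop:1} together with Lemma \ref{rem-VP} to the single-index piece, handle the distinct-indices piece by univariate Gaussian extreme-value asymptotics, and conclude via the ``largest probability wins'' principle. Set
\begin{align*}
E_{\rm same} &:= \{\exists i \leq n : X_i > a_n u\}, \\
E_{\rm diff} &:= \{\exists i \neq j : X_i^{\sss(1)} > a_n u^{\sss(1)},\ X_j^{\sss(2)} > a_n u^{\sss(2)}\},
\end{align*}
so that $\{\bar X_n > a_n u\} = E_{\rm same} \cup E_{\rm diff}$. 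Proposition \ref{prop:1} combined with Lemma \ref{rem-VP} immediately yields $a_n^{-2} \log \bbP(E_{\rm same}) \to J_1(u/\sigma)$.

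For $E_{\rm diff}$, the upper bound comes from a union bound and the independence of the $X_i$'s,
\[
\bbP(E_{\rm diff}) \leq n(n-1)\, \bbP(X_1^{\sss(1)} > a_n u^{\sss(1)})\, \bbP(X_1^{\sss(2)} > a_n u^{\sss(2)}),
\]
and the standard Gaussian Mills' ratio converts this into $\exp\{a_n^2(2 - \tfrac{1}{2}\|u/\sigma\|_2^2) + o(a_n^2)\}$. The matching lower bound is obtained by splitting the indices into two disjoint halves and using independence between the halves,
\[
\bbP(E_{\rm diff}) \geq \bbP\big(\max_{i \leq n/2} X_i^{\sss(1)} > a_n u^{\sss(1)}\big)\, \bbP\big(\max_{n/2 < j \leq n} X_j^{\sss(2)} > a_n u^{\sss(2)}\big),
\]
each factor being of order $\exp\{a_n^2(1 - \tfrac{1}{2}(u^{\sss(k)}/\sigma^{\sss(k)})^2) + o(a_n^2)\}$ by the univariate specialization of Proposition \ref{prop:1}, which only uses the hypothesis $u^{\sss(k)} > \sqrt{2}\,\sigma^{\sss(k)}$.

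Combining the two estimates via $\max\{\bbP(E_{\rm same}), \bbP(E_{\rm diff})\} \leq \bbP(\bar X_n > a_n u) \leq \bbP(E_{\rm same}) + \bbP(E_{\rm diff})$ produces
\[
a_n^{-2} \log \bbP(\bar X_n > a_n u) \longrightarrow \max\big\{J_1(u/\sigma),\ 2 - \tfrac{1}{2}\|u/\sigma\|_2^2\big\}.
\]
To align this with the case split defining $J$, I invoke the computation already done inside the proof of Lemma \ref{rem-VP}: in the two cone regions where $u^{\sss(2)}/\sigma^{\sss(2)} \leq \rho u^{\sss(1)}/\sigma^{\sss(1)}$ or symmetrically, the standing hypothesis $u > \sqrt{2}\sigma$ (so $\tfrac{1}{2}(u^{\sss(k)}/\sigma^{\sss(k)})^2 > 1$ for each $k$) forces $J_1(u/\sigma) > 2 - \tfrac{1}{2}\|u/\sigma\|_2^2$, giving $J = J_1$; in the remaining region the $\max$ is literally what $J$ records.

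For the indices statement, I note the inclusions $\{I^\star = J^\star\} \cap \{\bar X_n > a_n u\} \subseteq E_{\rm same}$ and $\{I^\star \neq J^\star\} \cap \{\bar X_n > a_n u\} \subseteq E_{\rm diff}$, both direct from the definitions of $I^\star,J^\star$. When $2 - \tfrac{1}{2}\|u/\sigma\|_2^2 > J_1(u/\sigma)$ the denominator $\bbP(\bar X_n > a_n u)$ is of exponential order $2 - \tfrac{1}{2}\|u/\sigma\|_2^2$ while $\bbP(\{I^\star = J^\star\} \cap \{\bar X_n > a_n u\}) \leq \bbP(E_{\rm same})$ is on the strictly smaller exponential order $J_1(u/\sigma)$, so $\bbP(I^\star \neq J^\star \mid \bar X_n > a_n u) \to 1$; the reverse regime is symmetric. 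The main technical obstacle is the lower bound on $\bbP(E_{\rm diff})$, where the disjoint-halves decoupling is the key device that effectively decorrelates the two coordinates and allows a clean univariate reduction.
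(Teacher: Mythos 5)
Your proposal follows the paper's proof closely: decompose $\{\bar X_n > a_n u\}$ into the same-index event $E_{\rm same}$ and the distinct-indices event $E_{\rm diff}$, apply Proposition~\ref{prop:1} with Lemma~\ref{rem-VP} to the former, estimate the latter, and conclude via the principle of the largest term, with the index statement handled through the inclusions $\{I^\star=J^\star\}\cap\{\bar X_n>a_nu\}\subseteq E_{\rm same}$ and $\{I^\star\neq J^\star\}\cap\{\bar X_n>a_nu\}\subseteq E_{\rm diff}$, which is equivalent to the paper's log-ratio argument. The one place you genuinely sharpen the paper is the lower bound for $\bbP(E_{\rm diff})$: your disjoint-halves decoupling (splitting indices so the two coordinate maxima run over independent blocks) makes rigorous what the paper handles only at the level of a heuristic $\approx$ approximation, remarking merely that the neglected correction ``can also be incorporated in more detail.''
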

\medskip

It is an interesting problem to extend \eqref{asymp-indices-right} to the case where 
	\eqn{
	J(u/\sigma)=2-\tfrac{1}{2}\|u/\sigma\|^2_2=1-\frac{(u^{\sss(1)}/\sigma^{\sss(1)})^2-2\rho (u^{\sss(1)}/\sigma^{\sss(1)}) (u^{\sss(2)}/\sigma^{\sss(2)})+(u^{\sss(2)}/\sigma^{\sss(2)})^2}{2(1-\rho^2)},
	}
but this seems quite difficult, as it requires a finer analysis beyond the principle of the largest term~\cite[Lemma 1.2.15]{DemZei98}.

\begin{proof}
Note that
	\eqan{
	\bbP\left( \bar X_n > a_n u\right)
	&=\bbP(\exists i \leq n\colon X_i > a_n u)\\
	&\quad+\bbP(\exists i\neq j  \leq n\colon X_i^{\sss(1)} > a_n u^{\sss (1)}, X_j^{\sss(2)} > a_n u^{\sss (1)}, \not \exists i \leq n\colon X_i > a_n u).\nn
	}
Depending on $u$, the first or the second term will be dominant. Ignoring the event that $\not \exists i \leq n\colon X_i > a_n u$ (which leads to an upper bound, but this event can also be incorporated in more detail), we have (using that $X_i^{\sss(1)}, X_j^{\sss(2)}$ are Gaussian)
	\eqan{
	\bbP(\exists i\neq j  \leq n\colon X_i^{\sss(1)} > a_n u^{\sss (1)}, X_j^{\sss(2)} > a_n u^{\sss (1)})
	&\approx n(n-1) \bbP(X_i^{\sss(1)} > a_n u^{\sss (1)})\bbP(X_j^{\sss(2)} > a_n u^{\sss (2)})\\
	&\approx n^2 \exp{\Big\{-a_n^2 [(u^{\sss (1)}/\sigma^{\sss(1)})^2+(u^{\sss (2)}/\sigma^{\sss(2)})^2]/2\Big\}}.\nn
	}
Taking log's and dividing by $\log{n}=a_n^2$ gives 
	\eqan{
	\lim_{n\to\infty} \frac{1}{a_n^2} \log\bbP(\exists i\neq j  \leq n\colon X_i^{\sss(1)} > a_n u^{\sss (1)}, X_j^{\sss(2)} > a_n u^{\sss (1)})
	&=2-\tfrac{1}{2}[(u^{\sss (1)}/\sigma^{\sss(1)})^2+(u^{\sss (2)}/\sigma^{\sss(2)})^2]=2 - \tfrac{1}{2} \|u/\sigma\|_2^2\\
	&:=J_2(u/\sigma).\nn
	}

%More concretely, an example with different $\rho$'s to highlight the dependence of the constant term (the additional 1 or 2). 
%First, let $\rho=1$. Then, with $\sigma^{\sss(1)}=\sigma^{\sss(2)}=1$ (for simplicity)
%	\eqn{
%	\bbP\left( \bar X_n > a_n u\right)=\bbP(\max_{i=1}^n X_n^{\sss(1)} > a_n \bar{u}),
%	}
%where $\bar{u}=\max\{u^{\sss(1)},u^{\sss(2)}\}$, so that
%	\eqn{
%	\lim_{n\to\infty} \frac{1}{a_n^2} \log\bbP(\bar X_n > a_n u)
%	=1-\tfrac{1}{2}\bar{u}^2.
%	}
%This is what is expected, with the $+1$.
%
%Now take $\rho=1$. Then the components of $\bar X_n$ are independent copies of maxima of $n$ standard Gaussians, so that
%	\eqn{
%	\bbP\left( \bar X_n > a_n u\right)=\bbP(\max_{i=1}^n X_n^{\sss(1)} > a_n u^{\sss(1)})\bbP(\max_{i=1}^n X_n^{\sss(2)} > a_n u^{\sss(2)}).
%	}
%This implies
%	\eqn{
%	\lim_{n\to\infty} \frac{1}{a_n^2} \log\bbP(\bar X_n > a_n u)
%	=2-\tfrac{1}{2}[(u^{\sss(1)})^2+(u^{\sss(2)})^2].
%	}
%Here we have the term $+2$, as I expect to occur more generally when $\rho$ is small.

Then, by the principle of the largest term~\cite[Lemma 1.2.15]{DemZei98}, it follows that
	\eqan{
	\label{25}
	\lim_{n\to\infty} 
	\frac{1}{a_n^2} \log \bbP(\bar X_n > a_n u) &= \max\Big\{\lim_{n\to\infty} \frac{1}{a_n^2} \log
	\bbP\left( \exists i \leq n\colon X_i > a_n u\right),\\
	&\qquad\qquad\lim_{n\to\infty} \frac{1}{a_n^2} \log \bbP\left(\exists i\neq j  \leq n\colon X_i^{\sss(1)} > a_n u^{\sss (1)}, X_j^{\sss(2)} > a_n u^{\sss (1)} \right) \Big\}\nonumber\\
	\nonumber
	&= \max\left\{J_1(u/\sigma),~J_2(u/\sigma)\right\}.
	}
Further,
	\eqan{
	\lim_{n\rightarrow \infty}\frac{1}{a_n^2} \log\prob\big(I^\star\neq J^\star\mid \bar X_n> a_n u\big)
	&=\lim_{n\rightarrow \infty}\frac{1}{a_n^2} \Big[\log\prob\big(I^\star\neq J^\star,\bar X_n> a_n u\big)-\log\prob\big(\bar X_n> a_n u\big)\Big]\\
	&\leq J_2(u/\sigma)-J(u/\sigma)<0,\nn
	}
when $J_2(u/\sigma)<J(u/\sigma)$, showing that $\prob\big(I^\star\neq J^\star\mid \bar X_n> a_n u\big)=o(1)$ when $J_2(u/\sigma)<J(u/\sigma)$.
Similarly,
	\eqan{
	\lim_{n\rightarrow \infty}\frac{1}{a_n^2} \log\prob\big(I^\star=J^\star\mid \bar X_n> a_n u\big)
	&=\lim_{n\rightarrow \infty}\frac{1}{a_n^2} \Big[\log\prob\big(I^\star=J^\star,\bar X_n> a_n u\big)-\log\prob\big(\bar X_n> a_n u\big)\Big]\\
	&\leq J_1(u/\sigma)-J(u/\sigma)<0,\nn
	}
when $J_1(u/\sigma)<J(u/\sigma)$, showing that $\prob\big(I^\star=J^\star\mid \bar X_n> a_n u\big)=o(1)$ when $J_1(u/\sigma)<J(u/\sigma)$. This proves \eqref{asymp-indices-right} subject to \eqref{20}.

We are left to prove the claim in \eqref{20}, for which we consider several cases:

\noindent {\bf Case (1): $u^{\sss(2)}/\sigma^{\sss(2)} \leq \rho u^{\sss(1)}/\sigma^{\sss(1)}$}. Under the assumption that $u^{\sss (j)} > \sqrt{2} \sigma^{\sss (j)}$ for $j = 1,2$, it is straightforward to see that $1 - \frac{1}{2} \left(u^{\sss(1)}/\sigma^{\sss(1)}\right)^2 > 2 - \frac{1}{2} \|u/\sigma\|_2^2$. Lemma~\ref{rem-VP} implies that
	\eqn{
	\label{26}
	\lim_{n\to\infty} \frac{1}{a_n^2} \log \bbP(\bar X_n > a_n u) = 1 - \tfrac{1}{2} \left(u^{\sss(1)}/\sigma^{\sss(1)}\right)^2=J_1(u/\sigma).
	}

\noindent{\bf Case (2): $u^{\sss(2)}/\sigma^{\sss(2)} \leq \rho u^{\sss(1)}/\sigma^{\sss(1)}$}. The proof follows case (1) and is omitted.

\noindent{\bf Case (3): $u^{\sss(2)}/\sigma^{\sss(2)} > \rho u^{\sss(1)}/\sigma^{\sss(1)}$ and $u^{\sss(1)}/\sigma^{\sss(1)} > \rho u^{\sss(2)}/\sigma^{\sss(2)}$}. Lemma~\ref{rem-VP} implies that~\eqref{25} is simply
	$$
	\max\{J_1(u/\sigma), J_2(u/\sigma)\}
	=\max\Big\{2 - \tfrac{1}{2} \left\|{u} \right\|_2^2, 1-\frac{(u^{\sss(1)}/\sigma^{\sss(1)})^2-2\rho (u^{\sss(1)}/\sigma^{\sss(1)}) (u^{\sss(2)}/\sigma^{\sss(2)})+(u^{\sss(2)}/\sigma^{\sss(2)})^2}{2(1-\rho^2)}\Big\}.
	$$
Which of the two is the maximizer depends sensitively on the relation between $\rho$ and $u$.
\end{proof}

\section{Large-scale Asymptote}
\label{sec-incl-excl}
  We look at $\prob(\bar X_n>a_n u)$, where $a_n\gg \sqrt{\log{n}}$, so that we are considering a large-deviation event. Recall that $(X_i^{\sss(1)},X^{\sss(2)}) = (\sigma^{\sss(1)} Z_i^{\sss(1)}, \sigma^{\sss(2)} Z_i^{\sss(2)})$. By changing $u=(u^{\sss(1)}, u^{\sss(2)})$ if needed, it thus suffices to study the standard case, and in what follows, we will therefore focus on $\prob(\bar Z_n>a_n u)$. We prove the following two main theorems:

\begin{theorem}[Leading order asymptotics extremes]
\label{thm-leading-order}
For any $a_n\gg \sqrt{\log{n}}$, and with $u^{\sss(2)}\leq u^{\sss(1)}$,
	\eqn{
	\label{mod-LD-BVN}
	 \lim_{n\to\infty} \frac{1}{a_n^2} \log \prob\big(\bar Z_n> a_n u) = -I(u),
	}
where 
	\eqn{
	\label{rate-BVN}
	I(u)=\begin{cases}
	\frac{1}{2}(u^{\sss(1)})^2 &\text{when }u^{\sss(2)}\leq \rho u^{\sss(1)},\\
	\frac{1}{2}\min\big\{\|u\|^2_2, \frac{(u^{\sss(1)})^2-2\rho u^{\sss(1)} u^{\sss(2)}+(u^{\sss(2)})^2}{1-\rho^2}\big\}
	&\text{otherwise.}
	\end{cases}
	}
\end{theorem}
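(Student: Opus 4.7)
The plan is to decompose $\{\bar Z_n > a_n u\}$ exactly as in Theorem~\ref{thm-main-small}: either a single index realizes $Z_i > a_n u$, or two distinct indices each carry one coordinate. Writing $p_n := \prob(Z_1 > a_n u)$ and $q_n := \prob(Z_1^{\sss(1)} > a_n u^{\sss(1)})\,\prob(Z_1^{\sss(2)} > a_n u^{\sss(2)})$, I aim to show that $\prob(\bar Z_n > a_n u)$ is, up to factors of the form $e^{o(a_n^2)}$, of the same order as $\max\{n p_n,\, n^2 q_n\}$. The essential new feature relative to Theorem~\ref{thm-main-small} is that $a_n^2 \gg \log n$, so both $n p_n$ and $n^2 q_n$ tend to zero and the polynomial prefactors contribute only $O(\log n) = o(a_n^2)$---in particular, no ``$+1$'' or ``$+2$'' entropy term survives in the exponential rate. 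What is left are the two pure exponents $I_0(u) := \tfrac{1}{2}\essinf_{x > u} x^T \Sigma^{-1} x$ (from Lemma~\ref{lem:1}, evaluated in Lemma~\ref{rem-VP}) and $\tfrac{1}{2}\|u\|_2^2$ (from a product of one-dimensional Gaussian tails). Using Lemma~\ref{rem-VP} together with the standing hypothesis $u^{\sss(2)} \leq u^{\sss(1)}$, one checks directly that $I(u)$ in~\eqref{rate-BVN} equals $\min\{I_0(u),\,\tfrac{1}{2}\|u\|_2^2\}$: in particular, when $u^{\sss(2)} \leq \rho u^{\sss(1)}$ one has $I_0(u) = \tfrac{1}{2}(u^{\sss(1)})^2 \leq \tfrac{1}{2}\|u\|_2^2$ automatically, so the minimum collapses to $I_0(u)$ and matches the first case of \eqref{rate-BVN}.

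\textbf{Upper bound.}
The set inclusion
\[
\{\bar Z_n > a_n u\} \subseteq \{\exists\, i \leq n \colon Z_i > a_n u\} \cup \{\exists\, i \neq j \leq n \colon Z_i^{\sss(1)} > a_n u^{\sss(1)},\ Z_j^{\sss(2)} > a_n u^{\sss(2)}\},
\]
combined with a union bound and the independence of $(Z_i, Z_j)$ for $i \neq j$, yields $\prob(\bar Z_n > a_n u) \leq n p_n + n(n-1) q_n$. Lemma~\ref{lem:1} gives $\tfrac{1}{a_n^2}\log p_n \to -I_0(u)$, and standard one-dimensional Gaussian tail asymptotics give $\tfrac{1}{a_n^2}\log q_n \to -\tfrac{1}{2}\|u\|_2^2$. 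Since $\log n/a_n^2 \to 0$, the principle of the largest term~\cite[Lemma~1.2.15]{DemZei98} then produces $\limsup_{n\to\infty} \tfrac{1}{a_n^2}\log \prob(\bar Z_n > a_n u) \leq -\min\{I_0(u),\,\tfrac{1}{2}\|u\|_2^2\} = -I(u)$.

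\textbf{Lower bound and main obstacle.}
For the matching lower bound, bound $\prob(\bar Z_n > a_n u)$ below by each sub-event separately, each estimated via a second-order Bonferroni inequality. The single-index estimate is immediate: $\prob(\exists\, i \colon Z_i > a_n u) \geq n p_n - \binom{n}{2} p_n^2 = n p_n(1-o(1))$, since $n p_n \to 0$ in this regime. The two-index estimate is the main obstacle, since Bonferroni produces an error $R_n$ that sums intersection probabilities over all distinct pairs of ordered index-pairs $(i_1, j_1) \neq (i_2, j_2)$ with $i_k \neq j_k$. Stratifying by the overlap pattern among $\{i_1, i_2, j_1, j_2\}$ yields finitely many classes---no shared index, a single shared index, or the ``swap'' $i_1 = j_2$ and $j_1 = i_2$---each of which factorizes into a product of terms of the form $p_n$, $q_n$, or the marginal one-dimensional tails. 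A direct exponent comparison---using crucially the elementary inequality $I_0(u) \geq \tfrac{1}{4}\|u\|_2^2$, which controls the tightest ``swap'' contribution $n^2 p_n^2$---shows each class is $o(n^2 q_n)$, so $R_n = o(n^2 q_n)$ and $\prob(\exists\, i \neq j \colon \ldots) \geq n^2 q_n(1-o(1))$. Taking the max of the two sub-bounds then gives $\liminf_{n\to\infty} \tfrac{1}{a_n^2}\log \prob(\bar Z_n > a_n u) \geq -\min\{I_0(u),\,\tfrac{1}{2}\|u\|_2^2\} = -I(u)$, completing the argument.
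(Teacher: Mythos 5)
Your proposal is correct and takes essentially the same route as the paper: the paper's proof of Theorems~\ref{thm-leading-order} and~\ref{thm-sharp-order} is a single inclusion--exclusion argument over the events $A_{(i,j)}=\{Z_i^{\sss(1)}>a_nu^{\sss(1)},\,Z_j^{\sss(2)}>a_nu^{\sss(2)}\}$, exploiting that $\log n = o(a_n^2)$ to discard polynomial prefactors and a case-by-case comparison of exponents to kill the second-order Bonferroni error, exactly as you do. Your version is a stripped-down, modular rendering of that argument (splitting into the single-index and two-index sub-events before applying Bonferroni, and tracking only the exponential rate rather than the Laplace-method prefactors needed for Theorem~\ref{thm-sharp-order}); the overlap-pattern stratification you describe mirrors the paper's Cases (2a)--(4). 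One tiny point worth tightening: to conclude $n^2p_n^2 = o(n^2 q_n)$ you need the inequality $I_0(u) \geq \tfrac14\|u\|_2^2$ to be \emph{strict}. It is, for $|\rho|<1$ — the interior-case computation reduces to $(1+\rho^2)(a^2+b^2) - 4\rho ab \geq 0$, with equality only at $\rho=1$ — and alternatively, the two-index lower bound is only needed when $\tfrac12\|u\|_2^2 < I_0(u)$, in which case $I_0(u) > \tfrac14\|u\|_2^2$ holds automatically, so the argument goes through in any case.
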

\medskip

Theorem \ref{thm-leading-order} is a special example of the two-dimensional Cram\'er Theorem (see e.g., \cite{DemZei98} or \cite{Holl00}). The main aim of this paper in the large-scale asymptotics is to study the sharp asymptotics of a large deviation of the bi-variate normal distribution, which is quite interesting indeed:

\begin{theorem}[Sharp asymptotics extremes]
\label{thm-sharp-order}
For any $a_n\gg \sqrt{\log{n}}$, and with $u^{\sss(2)}\leq u^{\sss(1)}$,
	\eqn{
	\label{mod-LD-BVN-sharp}
	\lim_{n\rightarrow \infty} a_n^bn^c\ex^{a_n^2 I(u)}\prob\big(\bar Z_n> a_n u) = K,
	}
where 
	\eqn{
	\label{abc-BVN}
	b=1+\indic{u^{\sss(2)}\geq \rho u^{\sss(1)}},
	\qquad
	c=1+\indic{I(u)=\|u\|^2/2},
	}
and 
	\eqn{
	K=
	\begin{cases}
	\frac{1}{2\pi u^{\sss(1)}u^{\sss(2)}} 	&\text{ when }I(u)=\tfrac{1}{2}\|u\|^2, u^{\sss(1)}\neq u^{\sss(2)},\\
	\frac{1}{4\pi u^{\sss(1)}u^{\sss(2)}} 	&\text{ when }I(u)=\tfrac{1}{2}\|u\|^2, u^{\sss(1)} = u^{\sss(2)},\\
	\frac{1}{2\pi u^{\sss(1)}}				&\text{ when }I(u)<\tfrac{1}{2}\|u\|^2, u^{\sss(2)}<\rho u^{\sss(1)},\\
	\frac{1}{4\pi u^{\sss(1)}}				&\text{ when }I(u)<\tfrac{1}{2}\|u\|^2, u^{\sss(2)}=\rho u^{\sss(1)},\\
	\frac{1-\rho^2}{2\pi(u^{\sss(1)}-\rho u^{\sss(2)})(u^{\sss(2)}-\rho u^{\sss(1)})}&\text{ when }I(u)<\tfrac{1}{2}\|u\|^2, u^{\sss(2)}>\rho u^{\sss(1)}.
	\end{cases}
	}
%\RvdH{Check exact constant carefully!}
Consequently, with $(I^\star, J^\star)$ the indices that maximize $\bar Z_n$ (i.e., $\bar Z_n=(Z^{\sss(1)}_{I^\star}, Z^{\sss(2)}_{J^\star})$),
	\eqn{
	\lim_{n\rightarrow \infty}\prob\big(I^\star\neq J^\star\mid \bar Z_n> a_n u\big)=
	\begin{cases}
	1 &\text{ when }I(u)=\tfrac{1}{2}\|u\|^2_2,\\
	0 &\text{ otherwise}.
	\end{cases}
	}
\end{theorem}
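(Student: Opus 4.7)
I refine the leading-order asymptote of Theorem~\ref{thm-leading-order} by combining a sharp two-term inclusion-exclusion with Mills-ratio/Laplace asymptotics for univariate and bivariate Gaussian tails. Setting
\[
p_1 := \prob(Z_1^{\sss(1)} > a_n u^{\sss(1)}), \quad p_2 := \prob(Z_1^{\sss(2)} > a_n u^{\sss(2)}), \quad p_{12} := \prob(Z_1^{\sss(1)} > a_n u^{\sss(1)}, Z_1^{\sss(2)} > a_n u^{\sss(2)}),
\]
and $N_j := \#\{i \le n : Z_i^{\sss(j)} > a_n u^{\sss(j)}\}$, one has $\{\bar Z_n > a_n u\} = \{N_1 \ge 1, N_2 \ge 1\}$. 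Expanding
\[
\prob(\bar Z_n > a_n u) = 1 - (1-p_1)^n - (1-p_2)^n + (1-p_1-p_2+p_{12})^n
\]
by Taylor series (justified since $a_n^2 \gg \log n$ makes $np_j, np_{12}$ super-polynomially small) yields
\[
\prob(\bar Z_n > a_n u) = n p_{12}\,(1+o(1)) + n(n-1)\, p_1 p_2\,(1+o(1)),
\]
with higher-order cross terms exponentially negligible compared to either leading contribution.

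\textbf{Sharp tail estimates.} The Mills ratio gives $p_j = \tfrac{1+o(1)}{\sqrt{2\pi}\,a_n u^{\sss(j)}} \exp\{-\tfrac{1}{2} a_n^2 (u^{\sss(j)})^2\}$, which pins down $n(n-1) p_1 p_2$ sharply with exponential rate $\tfrac{1}{2}\|u\|_2^2$. For $p_{12}$, Laplace's method applied to the bivariate Gaussian integral over $\{x>a_n u^{\sss(1)}, y>a_n u^{\sss(2)}\}$ splits into regimes according to the KKT structure of the quadratic form $\tfrac{1}{2(1-\rho^2)}(x^2-2\rho xy+y^2)$. In the interior regime $u^{\sss(2)}>\rho u^{\sss(1)}$ and $u^{\sss(1)}>\rho u^{\sss(2)}$, both constraints are active at the corner minimizer $(a_n u^{\sss(1)}, a_n u^{\sss(2)})$, producing rate $I_{\rm bi}(u) = \tfrac{(u^{\sss(1)})^2 - 2\rho u^{\sss(1)} u^{\sss(2)} + (u^{\sss(2)})^2}{2(1-\rho^2)}$ with prefactor of order $a_n^{-2}$. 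In the regime $u^{\sss(2)}<\rho u^{\sss(1)}$, the second-coordinate constraint is slack because the conditional mean $\rho x$ exceeds $a_n u^{\sss(2)}$ by a growing margin, so the inner half-line integral contributes $1+o(1)$ and $p_{12}=p_1(1+o(1))$ with rate $(u^{\sss(1)})^2/2$. On the critical line $u^{\sss(2)}=\rho u^{\sss(1)}$ the conditional Gaussian is centered exactly at the threshold and contributes only half its mass, giving $p_{12}=\tfrac{1}{2} p_1(1+o(1))$---this is the source of the $\tfrac{1}{4\pi u^{\sss(1)}}$ prefactor in Theorem~\ref{thm-sharp-order}.

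\textbf{Identification of $b$, $c$, $K$, and conditional index statement.} Substituting these asymptotics into $n p_{12} + n(n-1) p_1 p_2$, the overall exponential rate is the minimum of the single-index rate (either $I_{\rm bi}(u)$ or $(u^{\sss(1)})^2/2$) and $\tfrac{1}{2}\|u\|_2^2$, recovering $I(u)$ from \eqref{rate-BVN}. The pre-exponential powers of $a_n$ and $n$ in the dominant term determine $b$ and $c$ in \eqref{abc-BVN}, and the constant $K$ in each of the five cases reads off directly from the explicit Mills-ratio and Laplace prefactors, including the extra $\tfrac{1}{2}$ factor on the critical boundaries $u^{\sss(2)}=\rho u^{\sss(1)}$ and $u^{\sss(1)}=u^{\sss(2)}$. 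The main technical obstacle is the sharp Laplace analysis in the boundary regimes, where the minimizer sits on a face of the feasible polygon and the Gaussian error-function correction must be tracked to leading order; additionally, when the two exponential rates coincide, the two contributions must be summed at the same order without double-counting via the joint event $A \cap B$, which is handled by a third-order Bonferroni correction that is provably subdominant. The conditional-index claim follows from the same decomposition: $\prob(I^\star=J^\star,\, \bar Z_n > a_n u) \sim n p_{12}$ while $\prob(I^\star \ne J^\star,\, \bar Z_n > a_n u) \sim n(n-1) p_1 p_2$, and dividing by $\prob(\bar Z_n > a_n u)$ gives $1$ when the pair-index term dominates ($I(u)=\tfrac{1}{2}\|u\|_2^2$) and $0$ otherwise.
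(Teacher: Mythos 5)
Your decomposition is genuinely different from the paper's. The paper writes $\{\bar Z_n>a_n u\}$ as a union over index pairs $\bigcup_{(i,j)}A_{(i,j)}$ with $A_{(i,j)}=\{Z_i^{\sss(1)}>a_nu^{\sss(1)},\,Z_j^{\sss(2)}>a_nu^{\sss(2)}\}$, applies Bonferroni, and splits the sum into the diagonal ($i=j$, giving $np_{12}$) and off-diagonal ($i\neq j$, giving $n(n-1)p_1p_2$) pieces. You instead compute $\prob(N_1\geq 1,N_2\geq 1)$ exactly as
\[
1-(1-p_1)^n-(1-p_2)^n+(1-p_1-p_2+p_{12})^n
\]
and Taylor-expand. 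Both routes land on the leading terms $np_{12}+n(n-1)p_1p_2$, and your subsequent Mills-ratio and Laplace analysis of $p_1,p_2,p_{12}$ across the regimes $u^{\sss(2)}<\rho u^{\sss(1)}$, $=\rho u^{\sss(1)}$, $>\rho u^{\sss(1)}$ is essentially the same as the paper's (including the half-mass argument on the critical line $u^{\sss(2)}=\rho u^{\sss(1)}$). Your route is somewhat cleaner in that it avoids the bookkeeping of distinct index configurations $\{(i,j),(k,l)\}$ that the paper goes through to bound the error $e_n(u)$.

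However, there is a concrete gap in the way you extract $K$ in the symmetric case $u^{\sss(1)}=u^{\sss(2)}$. You assert that the ``extra $\tfrac{1}{2}$ factor on the critical boundaries $u^{\sss(2)}=\rho u^{\sss(1)}$ and $u^{\sss(1)}=u^{\sss(2)}$'' reads off from your expansion. For $u^{\sss(2)}=\rho u^{\sss(1)}$ you do justify it (the conditional Gaussian centered at the threshold contributes half its mass). But for $u^{\sss(1)}=u^{\sss(2)}$, your own formula gives no such factor: the coefficient on $p_1p_2$ is $\binom{n}{2}\cdot 2=n(n-1)$ irrespective of whether $u^{\sss(1)}=u^{\sss(2)}$, so your method produces $K=\tfrac{1}{2\pi u^{\sss(1)}u^{\sss(2)}}$, not the stated $\tfrac{1}{4\pi u^{\sss(1)}u^{\sss(2)}}$. (Indeed, a direct check with $\rho=0$, $u^{\sss(1)}=u^{\sss(2)}=u$, where $\prob(\bar Z_n>a_nu)=(1-(1-p)^n)^2\sim n^2p^2$, is consistent with $1/(2\pi u^2)$.) The paper obtains the extra $1/2$ by claiming $A_{(i,j)}=A_{(j,i)}$ when $u^{\sss(1)}=u^{\sss(2)}$ and restricting the union to $i\leq j$, which gives $n(n-1)/2$ unordered pairs; note, however, that those two events involve different random variables $(Z_i^{\sss(1)},Z_j^{\sss(2)})$ versus $(Z_j^{\sss(1)},Z_i^{\sss(2)})$, so this step deserves scrutiny. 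Whatever one decides about that, your proof as written asserts a constant that your own decomposition does not deliver, and you need to either derive the $1/2$ from your expansion (you cannot) or flag the discrepancy explicitly. A smaller point: you leave $b$ and $c$ as ``read off from prefactors''; since $\prob\sim n^2p_1p_2$ grows like $n^2a_n^{-2}\ex^{-a_n^2\|u\|^2/2}$ in the pair-dominant regime, matching against $a_n^bn^c\ex^{a_n^2I(u)}\prob\to K$ requires tracking the signs of these exponents carefully when you write them out.
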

\medskip

\proof We prove Theorem \ref{thm-leading-order} and \ref{thm-sharp-order} in one go. We note that
	\eqn{
	\prob(\bar Z_n>a_n u)=\prob\Big(\bigcup_{(i,j)} \{Z^{\sss\rm(1)}_i>a_n u^{\sss(1)}, Z^{\sss\rm(2)}_j> a_n u^{\sss(2)}\}\Big).
	}
We obtain
	\eqn{
	\prob(\bar Z_n>a_n u)=\prob\Big(\bigcup_{(i,j)} A_{(i,j)}\Big),
	}
where 
	\eqn{
	A_{(i,j)}=\{Z^{\sss\rm(1)}_i>a_n u^{\sss(1)}, Z^{\sss\rm(2)}_j> a_n u^{\sss(2)}\}.
	}
From this formula, we see the importance of symmetry, as $A_{(i,j)}=A_{(j,i)}$ for the symmetric case where $u^{\sss(1)}=u^{\sss(2)}$, but not when this is not the case. 
In the symmetric case where $u^{\sss(1)}=u^{\sss(2)}$, we note that $A_{(i,j)}=A_{(j,i)}$, so we may write, instead,
	\eqn{
	\prob(\bar Z_n>a_n u)=\prob\Big(\bigcup_{(i,j)\colon i\leq j} \{Z^{\sss\rm(1)}_i>a_n u^{\sss(1)}, Z^{\sss\rm(2)}_j> a_n u^{\sss(2)}\}\Big).
	}

\paragraph{\bf Using inclusion-exclusion.} We use inclusion-exclusion to obtain that
	\eqn{
	\sum_{(i,j)} \prob\big(A_{(i,j)}\big)
	-e_n(u)\leq \prob(\bar Z_n>a_n u)\leq \sum_{(i,j)} \prob\big(A_{(i,j)}\big),
	}
where
	\eqn{
	\label{error-term-enu}
	e_n(u)=\frac{1}{2}\sum_{(i,j)\neq (k,l)}\prob\big(A_{(i,j)}\cap A_{(k,l)}\big),
	}
while for the symmetric case, we sum over {\em ordered} pairs $(i,j)$ with $i\leq j$ instead.

Below, we analyse each of these terms. We separate between the case where (i) the indices are different; (ii) they are equal but the probability simplifies; (iii) they are different and we need to perform the integral over the joint density using the Laplace method. We start with the asymmetric case where $u^{\sss(1)}\neq u^{\sss(2)}$, remarking on the extension to the symmetric case at the end of the proof. Without loss of generality, we may assume that $u^{\sss(1)}>u^{\sss(2)}$.
% We close by studying the error term and extend the analysis, which was first performed in the asymmetric case, to the symmetric case.

\paragraph{\bf Sum of probabilities: unequal indices.}  First consider the case where $i\neq j$. Then, since $(Z^{\sss\rm(1)}_i, Z^{\sss\rm(2)}_j)$ are i.i.d.\ standard normal random variables, 
	\eqn{
	\sum_{(i,j)\colon i\neq j} \prob\big(A_{(i,j)}\big)
	=n(n-1) \big[1-\Phi(a_n u^{\sss(1)})\big]\big[1-\Phi(a_n u^{\sss(2)})\big],
	}
where $\Phi(x)=\prob(Z\leq x)$ is the error-function or the distribution function of a standard normal. By the asymptotics, for $x$ large,
	\eqn{
	1-\Phi(x)=\frac{1}{\sqrt{2\pi} x}\ex^{-x^2/2}(1+O(x^{-2})),
	}
we thus obtain that
	\eqn{
	\label{sum-probs-equal}
	\sum_{(i,j)\colon i\neq j} \prob\big(A_{(i,j)}\big)
	=n^2 a_n^2 \frac{1}{2\pi u^{\sss(1)}u^{\sss(2)}}\ex^{-a_n^2 \|u\|_2^2/2}(1+o(1)).
	}

\paragraph{\bf Sum of probabilities: simple cases of equal indices.} We next proceed with the case where $i=j$, for which we get	
	\eqn{
	\sum_{(i,i)} \prob\big(A_{(i,i)}\big)
	=n\prob\big(Z^{\sss\rm(1)}>a_n u^{\sss(1)}, Z^{\sss\rm(2)}> a_n u^{\sss(2)}\big).
	}
We use that, conditionally on $Z^{\sss\rm(1)}$, the law of $Z^{\sss\rm(2)}$ equals $Z^{\sss\rm(2)}=\rho Z^{\sss\rm(1)} +\sqrt{1-\rho^2}Z$, where $Z$ is independent of $Z^{\sss\rm(1)}$. We thus get that
	\eqan{
	\prob\big(Z^{\sss\rm(1)}>a_n u^{\sss(1)}, Z^{\sss\rm(2)}> a_n u^{\sss(2)}\big)
	&=\prob\big(Z^{\sss\rm(1)}>a_n u^{\sss(1)}, \rho Z^{\sss\rm(1)} +\sqrt{1-\rho^2}Z> a_n u^{\sss(2)}\big)\nn\\
	&=\expec\Big[\indic{Z^{\sss\rm(1)}>a_n u^{\sss(1)}}\prob\big(\sqrt{1-\rho^2} Z>a_n u^{\sss(2)}-\rho Z^{\sss\rm(1)}\mid Z^{\sss\rm(1)} \big)\Big].
	}
When $\rho u^{\sss(1)}>u^{\sss(2)}$,
	\eqn{
	\prob\big(\sqrt{1-\rho^2} Z>a_n u^{\sss(2)}-\rho Z^{\sss\rm(1)}\mid Z^{\sss\rm(1)} \big)=1+o(1),
	}
so that 
	\eqn{
	\label{final-easy-1}
	\sum_{(i,i)} \prob\big(A_{(i,i)}\big)
	=n\prob\big(Z^{\sss\rm(1)}>a_n u^{\sss(1)}\big)(1+o(1))
	=na_n \frac{1}{2\pi u^{\sss(1)}}\ex^{-a_n^2 (u^{\sss\rm(1)})^2/2}(1+o(1)).
	}
When $\rho u^{\sss(1)}=u^{\sss(2)}$, instead
	\eqn{
	\prob\big(\sqrt{1-\rho^2} Z>a_n u^{\sss(2)}-\rho Z^{\sss\rm(1)}\mid Z^{\sss\rm(1)} \big)=\frac{1}{2}+o(1),
	}
since $Z^{\sss\rm(1)}-a_n u^{\sss(1)}=\op(1)$ when $Z^{\sss\rm(1)}>a_n u^{\sss(1)}$. Thus, for $\rho u^{\sss(1)}=u^{\sss(2)}$ this leads to 
	\eqn{
	\label{final-easy-2}
	\sum_{(i,i)} \prob\big(A_{(i,i)}\big)
	=n\prob\big(Z^{\sss\rm(1)}>a_n u^{\sss(1)}\big)(1+o(1))
	=na_n \frac{1}{4\pi u^{\sss(1)}}\ex^{-a_n^2 (u^{\sss\rm(1)})^2/2}(1+o(1)).
	}

\paragraph{\bf Sum of probabilities: Lapace integral for equal indices.} When the above simple cases do not apply, we write $\prob\big(Z^{\sss\rm(1)}>a_n u^{\sss(1)}, Z^{\sss\rm(2)}> a_n u^{\sss(2)}\big)$ explicitly as a two-dimensional integral as
	\eqan{
	\prob\big(Z^{\sss\rm(1)}>a_n u^{\sss(1)}, Z^{\sss\rm(2)}> a_n u^{\sss(2)}\big)
	=\frac{1}{2\pi}\int_{a_n u^{\sss(1)}}^{\infty} \int_{a_n u^{\sss(2)}}^{\infty}  \ex^{-(x_1^2 -2\rho x_1x_2 +x_2^2)/2(1-\rho^2)}dx_2dx_1.
	}
We rescale the integrands by $a_n$ to obtain
	\eqan{
	\prob\big(Z^{\sss\rm(1)}>a_n u^{\sss(1)}, Z^{\sss\rm(2)}> a_n u^{\sss(2)}\big)
	=\frac{a_n^2}{2\pi}\int_{u^{\sss(1)}}^{\infty} \int_{u^{\sss(2)}}^{\infty}  \ex^{-a_n^2(x_1^2 -2\rho x_1x_2 +x_2^2)/2(1-\rho^2)}dx_2dx_1.
	}
This is a classic example of a Laplace integral. Thus, the integral is dominated by the minimum of $(x_1^2 -2\rho x_1x_2 +x_2^2)/2(1-\rho^2)$ over all $(x_1,x_2)$ for which $x_1\geq u^{\sss(1)}, x_2\geq u^{\sss(2)}$. Since $(x_1^2 -2\rho x_1x_2 +x_2^2)/2(1-\rho^2)$ is convex, this minimum is attained at one of the boundaries. Since $\rho u^{\sss(1)}<u^{\sss(2)}$, this minimum is attained at $x_1=u^{\sss(1)}, x_2=u^{\sss(2)}$ (see also the analysis in Lemma \ref{lem:1}).

Thus,
	\eqan{
	\prob\big(Z^{\sss\rm(1)}>a_n u^{\sss(1)}, Z^{\sss\rm(2)}> a_n u^{\sss(2)}\big)
	&=\frac{a_n^2}{2\pi}\exp{\big\{-a_n^2 \frac{(u^{\sss(1)})^2-2\rho u^{\sss(1)} u^{\sss(2)}+(u^{\sss(2)})^2}{2(1-\rho^2)}\big\}}\nn\\
	&\times\int_{u^{\sss(1)}}^{\infty} \int_{u^{\sss(2)}}^{\infty}  \exp{\big\{-a_n^2\frac{(x_1^2 -2\rho x_1x_2 +x_2^2)-(u^{\sss(1)})^2+2\rho u^{\sss(1)} u^{\sss(2)}-(u^{\sss(2)})^2}{2(1-\rho^2)}}dx_2dx_1.
	}
Therefore, we obtain that
	\eqan{
	&2\pi a_n^{-2} \exp{\big\{a_n^2 \frac{(u^{\sss(1)})^2-2\rho u^{\sss(1)} u^{\sss(2)}+(u^{\sss(2)})^2}{2(1-\rho^2)}\big\}}\prob\big(Z^{\sss\rm(1)}>a_n u^{\sss(1)}, Z^{\sss\rm(2)}> a_n u^{\sss(2)}\big)\nn\\
	&=\int_{u^{\sss(1)}}^{\infty} \int_{u^{\sss(2)}}^{\infty}  \exp{\big\{-a_n^2\frac{(x_1-u^{\sss(1)})^2 -2\rho (x_1-u^{\sss(1)})(x_2-u^{\sss(2)}) +(x_2-u^{\sss(2)})^2}{2(1-\rho^2)}}\nn\\
	&\quad \times  \exp{\Big\{-a_n^2\frac{\big[2u^{\sss(1)}(x_1-u^{\sss(1)})+2u^{\sss(2)}(x_2-u^{\sss(2)})
	-\rho u^{\sss(1)}(x_2-u^{\sss(2)})-\rho u^{\sss(2)}(x_1-u^{\sss(1)})\big]}{2(1-\rho^2)}\Big\}}dx_2dx_1.
	}
Since $\rho u^{\sss(1)}<u^{\sss(2)}$, we have that the quadratic function inside the exponential is minimized for $x_1=u^{\sss(1)}, x_2=u^{\sss(2)}$. Shifting both integrands by $u^{\sss(1)}$ and $u^{\sss(2)}$ respectively, leads to
	\eqan{
	&2\pi a_n^{-2} \exp{\big\{a_n^2 \frac{(u^{\sss(1)})^2-2\rho u^{\sss(1)} u^{\sss(2)}+(u^{\sss(2)})^2}{2(1-\rho^2)}\big\}}\prob\big(Z^{\sss\rm(1)}>a_n u^{\sss(1)}, Z^{\sss\rm(2)}> a_n u^{\sss(2)}\big)\nn\\
	&=\int_{0}^{\infty} \int_{0}^{\infty}  \exp{\Big\{-a_n^2\frac{x_1^2 -2\rho x_1x_2 +x_2^2}{2(1-\rho^2)}\Big\}}\exp{\Big\{-a_n^2\frac{2u^{\sss(1)}x_1+2u^{\sss(2)}x_2
	-\rho u^{\sss(1)}x_2-\rho u^{\sss(2)}x_1}{2(1-\rho^2)}\Big\}}dx_2dx_1.
	}
Now rescaling {\em both} integrands by $a_n^{-2}$ leads to 
	\eqan{
	&2\pi a_n^{2} \exp{\Big\{a_n^2 \frac{(u^{\sss(1)})^2-2\rho u^{\sss(1)} u^{\sss(2)}+(u^{\sss(2)})^2}{2(1-\rho^2)}\Big\}}\prob\big(Z^{\sss\rm(1)}>a_n u^{\sss(1)}, Z^{\sss\rm(2)}> a_n u^{\sss(2)}\big)\nn\\
	&=\int_{0}^{\infty} \int_{0}^{\infty}  \exp{\Big\{-a_n^{-2}\frac{x_1^2 -2\rho x_1x_2 +x_2^2}{2(1-\rho^2)}\Big\}}
	\exp{\Big\{-\frac{(u^{\sss(1)}-\rho u^{\sss(2)})x_1+(u^{\sss(2)}-\rho u^{\sss(1)})x_2}{1-\rho^2}\Big\}}dx_2dx_1.
	}
Again we see the significance of the assumption that $\rho u^{\sss(1)}<u^{\sss(2)}$, which implies that both linear terms have a {\em negative} coefficient, and thus the exponential functions are integrable. As a result, the first exponential in the integral only leads to an error term, so that 
	\eqan{
	\label{final-uneq-hard}
	&2\pi a_n^{2} \exp{\Big\{a_n^2 \frac{(u^{\sss(1)})^2-2\rho u^{\sss(1)} u^{\sss(2)}+(u^{\sss(2)})^2}{2(1-\rho^2)}\Big\}}\prob\big(Z^{\sss\rm(1)}>a_n u^{\sss(1)}, Z^{\sss\rm(2)}> a_n u^{\sss(2)}\big)\nn\\
	&=(1+o(1))\int_{0}^{\infty} \int_{0}^{\infty}\exp{\Big\{-\frac{(u^{\sss(1)}-\rho u^{\sss(2)})x_1+(u^{\sss(2)}-\rho u^{\sss(1)})x_2}{1-\rho^2}\Big\}}dx_2dx_1\nn\\
	&=(1+o(1)) (1-\rho^2)(u^{\sss(1)}-\rho u^{\sss(2)})^{-1}(u^{\sss(2)}-\rho u^{\sss(1)})^{-1}.
	}
Combining \eqref{final-easy-1}--\eqref{final-easy-2} with \eqref{final-uneq-hard} yields the asymptotics of the sum of probabilities. Note that the final outcome yields \eqref{mod-LD-BVN-sharp} in the asymmetric case, so what is left is to show that the error term $e_n(u)$ is of smaller order.
\medskip

\paragraph{\bf The symmetric case: sum of probabilities.} We now look at the sum of probabilities for the symmetric case, and analyse $\prob(A_{(i,j)})$ there. The analysis for the case where $i\neq j$ is identical to the one above, except for the fact that the prefactor (due to the number of pairs $(i,j)$) is changed from $n(n-1)$ to $n(n-1)/2$. The contribution for the case where $i=j$ is also the same as above. In fact, it is easy to see that for $u^{\sss(1)}=u^{\sss(2)}=u$, we have $I((u,u))=u^2$ when $\rho\leq 0$, while $I((u,u))=u^2/(1+\rho)$ for $\rho>0$, since
	\eqn{
	\frac{(u^{\sss(1)})^2-2\rho u^{\sss(1)} u^{\sss(2)}+(u^{\sss(2)})^2}{1-\rho^2}
	=u^2 \frac{2(1-\rho)}{1-\rho^2}=u^2 \frac{2}{1+\rho}<2u^2,
	}
precisely when $\rho>0$. 

\paragraph{\bf The error term $e_n(u)$: asymmetric case.}  In dealing with error terms, we will make essential use of the fact that $a_n\gg \sqrt{\log{n}}$. This condition implies that if a certain event $A$ satisfies $\prob(A)\leq \ex^{-a_n^2 J}$ for some $J>I(u)$, then $\prob(A)$ will constitute an error term in evaluating $\prob(\bar Z_n>a_n u)$, irrespective of the precise powers of $a_n$ and $n$. Recall \eqref{error-term-enu}. We investigate the different ways that $(i,j)\neq (k,l)$ can occur, depending on the cardinality of $\{i,j,k,l\}$ which ranges from $2$ to $4$. Below, we assume throughout the analysis that the indices $i,j,k,l$ used are {\em distinct}.

\begin{description}

\item[Case (2a): $(i,j), (j,i)$] This corresponds to $\prob(\bar Z_n>a_n \bar{u})$, where $\bar{u}=(u^{\sss(1)}\vee u^{\sss(2)}, u^{\sss(1)}\vee u^{\sss(2)})$ and $x\vee y=\max\{x,y\}$ for $x,y\in {\mathbb R}$. This case was investigated in the previous step, and we see that the rate at speed $a_n^2$ equals $I((\bar{u},\bar{u}))>I(u)$, since $u^{\sss(1)}\neq u^{\sss(2)}$.

\item[Case (2b): $(i,i), (i,j)$ or $(i,i), (j,i)$] By independence, these probabilities equal $\prob\big(Z^{\sss\rm(1)}>a_n u^{\sss(1)}, Z^{\sss\rm(2)}> a_n u^{\sss(2)}\big)\prob(Z>a_nu^{\sss(2)})$ and  $\prob\big(Z^{\sss\rm(1)}>a_n u^{\sss(1)}, Z^{\sss\rm(2)}> a_n u^{\sss(2)}\big)\prob(Z>a_nu^{\sss(1)})$, respectively. Obviously, the rate at speed $a_n^2$ is strictly larger than $I(u)$.

\item[Case (3a): $(i,i), (j,k)$] By independence, this probability equals $\prob(A_{(i,i)})\prob(A_{(j,k)})$, the rate at speed $a_n^2$ again being strictly larger than $I(u)$.

\item[Case (3b): $(i,j), (j,k)$] By independence, this probability equals $\prob(A_{(j,j)})\prob(Z>a_nu^{\sss(1)})\prob(Z>a_nu^{\sss(2)})$, the rate at speed $a_n^2$ again being strictly larger than $I(u)$.

\item[Case (3c): $(i,j), (k,j)$] This case is similar. 

\item[Case (4): $(i,j), (k,l)$] By independence, this probability equals $\prob(A_{(i,j)})\prob(A_{(k,l)})$, the rate at speed $a_n^2$ being at least $2I(u)$, which is again strictly larger than $I(u)$.
\end{description}
Together, these cases show that $e_n(u)$ is of smaller order than the sum of probabilities in the asymmetric case.

\paragraph{\bf The error term $e_n(u)$: symmetric case.} 	This analysis is similar the the asymmmetric case, except that some cases do not arise. We again go through the distinct possibilities, writing $u=u^{\sss(1)}=u^{\sss(2)}$:

\begin{description}

\item[Case (2): $(i,i), (i,j)$ or $(i,i), (j,i)$] By independence, this probability equals $\prob\big(Z^{\sss\rm(1)}>a_n u, Z^{\sss\rm(2)}> a_n u\big)\prob(Z>a_nu)$. Obviously, the rate at speed $a_n^2$ is strictly larger than $I((u,u))$.

\item[Case (3a): $(i,i), (j,k)$] By independence, this probability equals $\prob(A_{(i,i)})\prob(A_{(j,k)})$, the rate at speed $a_n^2$ again being strictly larger than $I((u,u))$.

\item[Case (3b): $(i,j), (j,k)$] By independence, this probability equals $\prob(A_{(j,j)})\prob(Z>a_nu)\prob(Z>a_nu)$, the rate at speed $a_n^2$ again being strictly larger than $I((u,u))$.

\item[Case (4): $(i,j), (k,l)$] By independence, this probability equals $\prob(A_{(i,j)})\prob(A_{(k,l)})$, the rate at speed $a_n^2$ being at least $2I((u,u))$, which is again strictly larger than $I((u,u))$.
\end{description}
Together, these cases show that $e_n(u)$ is also of smaller order than the sum of probabilities in the symmetric case.
\qed	
\bigskip

%\begin{remark}[The critical case where $a_n=\sqrt{\log{n}}$.]
%\label{rem-crit-case}
%The situation becomes quite different when $a_n=\sqrt{\log{n}}$, as then all probabilities are polynomial in $n$. In particular, the fact that there are $O(n^2)$ choices for {\em distinct} indices to produce the two-dimensional maximum might make this case more likely compared to the case where the indices are the same, for which there are only $n$ possibilities. We defer this case to a future publication. 
%\end{remark}

\paragraph{\bf Acknowledgement.} The work of RvdH is supported by the Netherlands Organisation for Scientific Research (NWO) through VICI grant 639.033.806 and the Gravitation {\sc Networks} grant 024.002.003. The work of HH is partially supported by the National Science Foundation through grants CMMI-1636069 and DMS-1812197.

\bibliographystyle{plain}
\bibliography{refs}

\end{document}